\theoremstyle {plain}
\newtheorem {thm}{Theorem}[section]
\newtheorem {lem}[thm]{Lemma}
\newtheorem {cor}[thm]{Corollary}
\theoremstyle {definition}
\newtheorem {defn}[thm]{Definition}
\theoremstyle {remark}
\newtheorem {rem}[thm]{Remark}
\newtheorem {exmp}[thm]{Example}
\DeclareMathOperator{\LM}{LM}
\DeclareMathOperator{\LT}{LT}
\DeclareMathOperator{\LC}{LC}
\DeclareMathOperator{\LE}{LE}
\newcommand{\C}{{\mathbb C}}
\newcommand{\CF}{{\mathcal F}}
\newcommand{\Q}{{\mathbb Q}}
\newcommand{\gen}[1]{\langle #1 \rangle}
\newcommand{\dimK}[1]{\dim_K\big( #1 \big)}
\newcommand{\dimQ}[1]{\dim_\Q\left( #1 \right)}
\newcommand{\singular}{{\sc Singular }}
\begin{document}

\bibliographystyle{alpha}

\title{Solving via Modular Methods}

\author{Deeba Afzal}
\address{Deeba Afzal\\ Abdus Salam School of Mathematical Sciences\\ GC University\\ 
Lahore\\ 68-B\\ New Muslim Town\\ Lahore 54600\\ Pakistan}
\email{deebafzal@gmail.com}

%\author{Bur\c{c}in Er\"ocal}
%\address{Bur\c{c}in Er\"ocal\\ Department of Mathematics\\ University of Kaiserslautern\\
%Erwin-\linebreak Schr\"odinger-Str.\\ 67663 Kaiserslautern\\ Germany}
%\email{burcin@mathematik.uni-kl.de}

\author{Faira Kanwal}
\address{Faira Kanwal\\ Abdus Salam School of Mathematical Sciences\\ GC University\\ 
Lahore\\ 68-B\\ New Muslim Town\\ Lahore 54600\\ Pakistan}
\email{fairakanwaljanjua@gmail.com}

\author{Gerhard Pfister}
\address{Gerhard Pfister\\ Department of Mathematics\\ University of Kaiserslautern\\
Erwin-Schr\"odinger-Str.\\ 67663 Kaiserslautern\\ Germany}
\email{pfister@mathematik.uni-kl.de}

\author{Stefan Steidel}
\address{Stefan Steidel\\ Department of Mathematical Methods in Dynamics and Durability\\
 Fraunhofer Institute for Industrial Mathematics ITWM\\ 
 Fraunhofer-Platz 1\\ 67663 Kaiserslautern\\ Germany}
\email{stefan.steidel@itwm.fraunhofer.de} 

\keywords{triangular sets, solving polynomial systems, modular computation, parallel 
computation}

\date{\today}

\maketitle

\begin{abstract}
In this article we present a parallel modular algorithm to compute all solutions with multiplicities 
of a given zero-dimensional polynomial system of equations over the rationals.
In fact, we compute a triangular decomposition using M\"oller's algorithm (cf.\ \cite{M93}) of the 
corresponding ideal in the polynomial ring over the rationals using modular methods, and then 
apply a solver for univariate polynomials.
\end{abstract}

\section{Introduction} \label{secIntro}

One possible approach\footnote{In \singular (cf.\ \cite{DGPS12}) this approach is implemented
in the library \texttt{solve.lib} on the basis of an univariate \emph{Laguerre solver} (cf.\ \cite[\S 8.9-8.13]{RR78}).} 
to find the solutions of a zero-dimensional system of multivariate polynomials is the triangular decomposition
of the corresponding ideal since triangular systems of polynomials can be solved using a univariate
solver recursively. 
There are already several results in this direction including an implementation in \emph{Maple}
(cf.\ \cite{AM99}, \cite{LMX05}, \cite{M00}). The technique to compute triangular sets has been refined (cf.\ 
\cite{DMSWX05}, \cite{CLMPX07}, \cite{LMX06}), modularized and parallelized (cf.\ \cite{MX07}, \cite{LM07}). 
We report about a modular and parallel version of the solver in \textsc{Singular} (cf.\ \cite{DGPS12}) and 
focus mainly on a probabilistic algorithm to compute the solutions of a polynomial system with multiplicities.

\section{Preliminary technicalities} \label{secPreTech}

We recall the definition and some properties of a triangular decomposition.
For details we refer to \cite{GP07}, \cite{M93} and \cite{M97}.

Let $K$ be a field, $X=\{x_1,\ldots,x_n\}$ a set of variables, $I \subseteq K[X]$ a zero-dimensional ideal, 
and we fix $>$ to be the lexicographical ordering induced by $x_1>\ldots>x_n$.  If $f \in K[X]$ is
a polynomial, then we denote by $\LE(f)$ the leading exponent of $f$, by $\LC(f)$ the leading coefficient 
of $f$, by $\LM(f)$ the leading monomial of $f$, and by $\LT(f) = \LC(f) \cdot \LM(f)$ the leading term of $f$.

\begin{defn}
A set of polynomials $F = \{f_1,\ldots,f_n\} \subseteq K[X]$ is called a \emph{triangular set} if
$\LT(f_i) = x_{n-i+1}^{\alpha_i}$ for some $\alpha_i > 0$ and each $i=1,\ldots,n$.

A list $\CF = (F_1,\ldots,F_s)$ of triangular sets is called a \emph{triangular decomposition} of a zero-dimensional
ideal $I \subseteq K[X]$ if $\sqrt I = \sqrt{\gen{F_1}} \cap \ldots \cap \sqrt{\gen{F_s}}$.
\end{defn}

\pagebreak

\begin{rem} \
\begin{enumerate}
\item A triangular set is a Gr\"obner basis.
\item A minimal Gr\"obner basis of a maximal ideal is a triangular set, and the primary decomposition of
          $\sqrt I$ is a triangular decomposition of $I$.
\end{enumerate}
\end{rem}

The following two lemmata are the basis for the algorithm by M\"oller (cf.\ \cite{M93}) which avoids primary 
decomposition.

\begin{lem} \label{lemMoeller1}
Let $G = \{g_1,\ldots,g_m\}$ be a reduced Gr\"obner basis of the zero-dimen\-sional ideal $I \subseteq K[X]$
such that $\LM(g_1)<\ldots<\LM(g_m)$. Moreover, for $i = 1,\ldots,m$, let $\alpha_i = \LE(g_i)$ in 
$\left(K[x_2,\ldots,x_n]\right)[x_1]$, i.e.\ $g_i=\sum_{j=0}^{\alpha_i} g_{ij}^\prime x_1^j$ for suitable
$g_{ij}^\prime \in K[x_2,\ldots,x_n]$. 
Then $G^\prime = \{g_{1\alpha_1}^\prime,\ldots,g_{m-1\alpha_{m-1}}^\prime\}$ is a Gr\"obner basis of 
$\gen{g_1,\ldots,g_{m-1}}:g_m$, and we have $\gen{G^\prime,g_m} = \gen{G^\prime,G}$.
\end{lem}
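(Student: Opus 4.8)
The plan is to exploit the shape that zero-dimensionality forces on $g_m$. Since $\dim_K K[X]/I<\infty$, some power $x_1^N$ lies in the leading ideal $\gen{\LM(g_1),\dots,\LM(g_m)}$; by reducedness at most one $g_k$ can have $\LM(g_k)$ a power of $x_1$ (otherwise one such leading monomial divides another), and that $g_k$ then has the largest leading monomial, so $k=m$, $\LM(g_m)=x_1^{\alpha_m}$, and $g_{m\alpha_m}^\prime=1$. Moreover $\alpha_j=\deg_{x_1}\LM(g_j)<\alpha_m$ for $j<m$. The observation I would isolate first is: \emph{if $f\in K[X]$ has $\deg_{x_1}f<\alpha_m$, then no reduction of $f$ modulo $G$ uses $g_m$} --- applying $g_m$ needs a monomial of $x_1$-degree at least $\alpha_m$, whereas a reduction step by some $g_i$ with $i<m$ replaces a monomial by monomials of the same $x_1$-degree, so the $x_1$-degree never rises above $\deg_{x_1}f$. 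Hence $\mathrm{NF}(f\mid G)=\mathrm{NF}(f\mid\{g_1,\dots,g_{m-1}\})$ for such $f$, so $f\in I$ iff $f\in I^\prime:=\gen{g_1,\dots,g_{m-1}}$; and since each $S$-polynomial $S(g_i,g_j)$ with $i,j<m$ has $x_1$-degree $<\alpha_m$ and reduces to $0$ modulo $G$, Buchberger's criterion shows $\{g_1,\dots,g_{m-1}\}$ is a Gr\"obner basis of $I^\prime$.

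Next I would check $G^\prime\subseteq I^\prime:g_m$. For $i<m$ put $h_i:=g_{i\alpha_i}^\prime g_m-x_1^{\alpha_m-\alpha_i}g_i$. Both summands have $x_1$-degree $\alpha_m$ with coefficient $g_{i\alpha_i}^\prime$ at $x_1^{\alpha_m}$ (here $g_{m\alpha_m}^\prime=1$ enters), so $\deg_{x_1}h_i<\alpha_m$; as $h_i\in I$, the observation gives $h_i\in I^\prime$, whence $g_{i\alpha_i}^\prime g_m=h_i+x_1^{\alpha_m-\alpha_i}g_i\in I^\prime$, i.e.\ $g_{i\alpha_i}^\prime\in I^\prime:g_m$.

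For the opposite inclusion and the Gr\"obner property at once, I would argue with leading monomials. If $h=\sum_j h_j x_1^j$ is nonzero with top coefficient $h_e$, then $\LM(h)=x_1^e\LM(h_e)$; in particular $\LM(g_i)=x_1^{\alpha_i}\LM(g_{i\alpha_i}^\prime)$ for $i<m$. Take $0\ne p\in I^\prime:g_m$ with $\deg_{x_1}p=d$ and top coefficient $p_d$. Since $g_{m\alpha_m}^\prime=1$, the top coefficient of $pg_m$ is $p_d$, so $\LM(pg_m)=x_1^{d+\alpha_m}\LM(p_d)$; as $pg_m\in I^\prime$ and $\{g_1,\dots,g_{m-1}\}$ is a Gr\"obner basis of $I^\prime$, some $\LM(g_i)=x_1^{\alpha_i}\LM(g_{i\alpha_i}^\prime)$ divides $\LM(pg_m)$, forcing $\LM(g_{i\alpha_i}^\prime)\mid\LM(p_d)\mid\LM(p)$. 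Thus the leading monomial of every nonzero element of $I^\prime:g_m$ is divisible by some $\LM(g_{i\alpha_i}^\prime)$. Together with $\gen{G^\prime}\subseteq I^\prime:g_m$ from the previous step, this makes the leading ideals of $G^\prime$, of $\gen{G^\prime}$ and of $I^\prime:g_m$ coincide, so $G^\prime$ is a Gr\"obner basis and $\gen{G^\prime}=I^\prime:g_m$.

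Finally, $\gen{G^\prime,g_m}=\gen{G^\prime,G}$ is immediate: $\gen{G^\prime}=I^\prime:g_m$ contains $I^\prime=\gen{g_1,\dots,g_{m-1}}$, so $\gen{G^\prime,g_m}=\gen{G^\prime}+\gen{g_m}\supseteq I^\prime+\gen{g_m}=\gen{G}$, while $\gen{G^\prime,g_m}\subseteq\gen{G^\prime,G}$ is trivial. I expect the main obstacle to lie in the first paragraph --- pinning down $\LM(g_m)=x_1^{\alpha_m}$ and $g_{m\alpha_m}^\prime=1$, and proving cleanly that low-$x_1$-degree reductions bypass $g_m$ (hence that $\{g_1,\dots,g_{m-1}\}$ is already a Gr\"obner basis of $I^\prime$); after that everything is a short leading-term computation. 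A point to be careful about: the Gr\"obner property of $G^\prime$ must be deduced from the equality of leading ideals, not presupposed while computing normal forms.
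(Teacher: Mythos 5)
Your proof is correct. Note that the paper itself offers no argument for this lemma --- it simply defers to \cite[Lemma 7]{M93} --- so what you have written is a self-contained replacement for that citation rather than a variant of a proof given in the text; the overall structure (extract $\LM(g_m)=x_1^{\alpha_m}$ with $g_{m\alpha_m}^\prime=1$ from zero-dimensionality and reducedness, show $\{g_1,\ldots,g_{m-1}\}$ is already a Gr\"obner basis of $I^\prime$ via the low-$x_1$-degree reduction observation, then the two leading-coefficient computations) is sound and each step checks out. Two small points of wording: a reduction step by $g_i$, $i<m$, produces monomials of $x_1$-degree \emph{at most} (not exactly) that of the monomial it replaces, which is all your observation needs; and the assertion that the unique pure power $x_1^{\alpha_m}$ among the leading monomials is the largest one rests on the divisibility remark you leave implicit --- any $\LM(g_j)$ of $x_1$-degree at least $\alpha_m$ would be divisible by $x_1^{\alpha_m}$, contradicting reducedness --- so it is worth stating explicitly, since it also yields $\alpha_j<\alpha_m$ for $j<m$.
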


\begin{proof}
The proof can be found in \cite[Lemma 7]{M93}.
\end{proof}

\begin{lem} \label{lemMoeller2}
Let $I \subseteq K[X]$ be  a zero-dimensional ideal, and $h \in K[X]$. Then the following hold.
\begin{enumerate}
\item $\sqrt I = \sqrt{\gen{I,h}} \cap \sqrt{I:h}$.
\item $\dimK{K[X]/I} = \dimK{K[X]/\gen{I,h}} + \dimK{K[X]/(I:h)}$.
\end{enumerate}
\end{lem}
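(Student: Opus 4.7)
The plan is to treat the two statements separately by standard methods: (1) is a radical identity that follows from direct manipulation of defining equations, while (2) follows from a short exact sequence of finite-dimensional $K$-vector spaces, where zero-dimensionality of $I$ is what guarantees finiteness.

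For (1), I would prove both inclusions. The inclusion $\sqrt I \subseteq \sqrt{\gen{I,h}} \cap \sqrt{I:h}$ is immediate: if $f^n \in I$ then $f^n \in \gen{I,h}$ and $f^n h \in I$, so $f$ belongs to both radicals. For the reverse inclusion, I would take $f$ in the intersection, pick $a,b$ with $f^a = g + c h$ for some $g \in I$, $c \in K[X]$, and $f^b h \in I$, and then compute
\[
f^{a+b} \;=\; f^a \cdot f^b \;=\; (g + c h) f^b \;=\; g f^b + c \,(f^b h) \;\in\; I,
\]
which shows $f \in \sqrt I$.

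For (2), the key observation is the short exact sequence of $K$-vector spaces
\[
0 \;\longrightarrow\; K[X]/(I:h) \;\stackrel{\cdot h}{\longrightarrow}\; K[X]/I \;\longrightarrow\; K[X]/\gen{I,h} \;\longrightarrow\; 0.
\]
The map $\bar f \mapsto \overline{fh}$ is well-defined because $fh \in I$ whenever $f \in I:h$, and injective by the very definition of the ideal quotient. The right-hand map is the canonical surjection induced by the inclusion $I \subseteq \gen{I,h}$, and its kernel is the image of multiplication by $h$, namely $(I + hK[X])/I$. Since $I$ is zero-dimensional, all three quotients are finite-dimensional $K$-vector spaces, so additivity of dimension in a short exact sequence yields the claimed identity.

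The only subtle point is verifying exactness in the middle, i.e.\ that the image of multiplication by $h$ in $K[X]/I$ equals the kernel of the projection to $K[X]/\gen{I,h}$; this is routine but I would state it carefully since it amounts to the identity $\gen{I,h} = I + h K[X]$. Everything else is bookkeeping, and I do not anticipate any real obstacle beyond this.
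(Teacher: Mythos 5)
Your proof is correct, and the core idea for (2)---that multiplication by $h$ relates the three residue rings via an exact sequence---is the same as the paper's. The only difference is one of packaging: the paper uses the four-term exact sequence
\[
0 \longrightarrow (I:h)/I \longrightarrow K[X]/I \stackrel{\cdot h}{\longrightarrow} K[X]/I \longrightarrow K[X]/\gen{I,h} \longrightarrow 0
\]
together with the short exact sequence $0 \to (I:h)/I \to K[X]/I \to K[X]/(I:h) \to 0$ coming from the third isomorphism theorem, whereas you apply that isomorphism theorem up front to replace the source of the $\cdot h$ map by $K[X]/(I:h)$ and so collapse the argument into a single short exact sequence. The two routes are equivalent and yield the same dimension count. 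For (1), the paper simply cites an exercise in \cite{GP07}, while you supply the short direct computation; your reverse inclusion argument ($f^{a+b} = gf^b + c(f^b h) \in I$) is exactly the standard one and is correct. One very small point worth making explicit, which you gesture at: $K[X]/(I:h)$ and $K[X]/\gen{I,h}$ are finite-dimensional because both $I:h$ and $\gen{I,h}$ contain $I$, so these are quotients of the finite-dimensional $K[X]/I$.
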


\begin{proof} \
\begin{enumerate}
\item The proof is an exercise in \cite[Exercise 4.5.3]{GP07}.
\item We consider two exact sequences. The first one $$0 \longrightarrow (I:h)/I \longrightarrow K[X]/I
          \stackrel{\cdot h}{\longrightarrow} K[X]/I \longrightarrow K[X]/\gen{I,h} \longrightarrow 0$$ yields $\dimK{(I:h)/I}
          = \dimK{K[X]/\gen{I,h}}$, and the second one $$0 \longrightarrow (I:h)/I \longrightarrow K[X]/I \longrightarrow
          K[X]/(I:h) \longrightarrow 0$$ yields $\dimK{K[X]/I} = \dimK{(I:h)/I}+\dimK{K[X]/(I:h)}$. Summarized, we obtain
          $$\dimK{K[X]/I} = \dimK{K[X]/\gen{I,h}} + \dimK{K[X]/(I:h)}.$$
\end{enumerate}
\end{proof}

Consequently, for $h_1,\ldots,h_l \in K[X]$ and $h_{l+1} = 1$, we are able to apply Lemma \ref{lemMoeller2} 
inductively and obtain
\begin{align*}
\sqrt I & = \sqrt{\gen{I,h_1}} \cap \sqrt{I:h_1} \\
           & = \sqrt{\gen{I,h_1,h_2}} \cap \sqrt{\gen{I,h_1}:h_2} \cap \sqrt{I:h_1} \\
           & = \ldots \\
           & = \sqrt{\gen{I,h_1,\ldots,h_l}} \cap \left( \bigcap_{i=1}^{l+1} \sqrt{\gen{I,h_1,\ldots,h_{i-1}}:h_i} \right).
\end{align*}
Together with Lemma \ref{lemMoeller1} we conclude the following corollary.

\begin{cor} \label{corMoeller}
With the assumption of Lemma \ref{lemMoeller1}, let $G^\prime \smallsetminus G = \{h_1,\ldots,h_l\}$. Then the
following hold.
\begin{enumerate}
\item If $G^\prime \smallsetminus G \neq \emptyset$, then $I \subsetneq \gen{G,h_1}$ and $I \subsetneq 
          \gen G:h_1$.
\item $\sqrt I = \sqrt{\gen{G^\prime,g_m}} \cap \big(\bigcap_{i=1}^{l+1} \sqrt{\gen{G,h_1,\ldots,h_{i-1}}:h_i}\big)$
          and, in addition, $I \subseteq \gen{G^\prime,g_m} \cap \big(\gen{G,h_1,\ldots,h_{i-1}}:h_i\big)$.
\item $\dimK{K[X]/I} = \sum_{i=1}^{l+1} \dimK{K[X]/(\gen{G,h_1,\ldots,h_{i-1}}:h_i)}$.
\end{enumerate}
\end{cor}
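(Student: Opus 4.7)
The plan is to assemble the three parts of the corollary from Lemma \ref{lemMoeller1}, Lemma \ref{lemMoeller2} and the iterated identity displayed immediately above its statement, throughout using the convention $h_{l+1}=1$.

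For (2), the iterated identity (rewritten with $\gen G=I$) gives
\[
\sqrt I=\sqrt{\gen{G,h_1,\ldots,h_l}}\cap\bigcap_{i=1}^{l+1}\sqrt{\gen{G,h_1,\ldots,h_{i-1}}:h_i},
\]
and Lemma \ref{lemMoeller1} supplies $\gen{G,h_1,\ldots,h_l}=\gen{G,G^\prime}=\gen{G^\prime,g_m}$ (since $G^\prime\smallsetminus G=\{h_1,\ldots,h_l\}$ while $G^\prime\cap G\subseteq G$). Substituting this identification yields the stated decomposition; the leading factor $\sqrt{\gen{G^\prime,g_m}}$ simply emphasises the $i=l+1$ summand. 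The containments $I\subseteq\gen{G^\prime,g_m}$ and $I\subseteq\gen{G,h_1,\ldots,h_{i-1}}:h_i$ are automatic.

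Part (3) is the same iteration applied to Lemma \ref{lemMoeller2}(2), giving
\[
\dimK{K[X]/I}=\sum_{i=1}^l\dimK{K[X]/(\gen{G,h_1,\ldots,h_{i-1}}:h_i)}+\dimK{K[X]/\gen{G,h_1,\ldots,h_l}},
\]
and the trailing term absorbs into the sum as the $i=l+1$ summand via $h_{l+1}=1$. This part is purely mechanical once the pattern of (2) is in place.

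For (1), I treat the two strict inclusions separately. The inclusion $I\subsetneq\gen{G,h_1}$ reduces to $h_1\notin I$: writing $h_1=g_{i\alpha_i}^\prime\in G^\prime\smallsetminus G$, the hypothesis forces $\alpha_i\ge1$ (otherwise $g_i=g_{i0}^\prime=h_1\in G$), so $\LM(h_1)\in K[x_2,\ldots,x_n]$ with $\LM(g_i)=x_1^{\alpha_i}\LM(h_1)$. If $h_1$ were in $I$, reducedness of $G$ would yield some $j$ with $\LM(g_j)\mid\LM(h_1)$; this divisor is then $x_1$-free, forcing $\alpha_j=0$ and $g_j\in K[x_2,\ldots,x_n]$. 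Transitivity of divisibility then gives $\LM(g_j)\mid\LM(g_i)$, which contradicts reducedness of $G$ for $j\ne i$, and $\alpha_i\ge1$ for $j=i$.

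The second inclusion $I\subsetneq\gen G:h_1$ is the main obstacle. By Lemma \ref{lemMoeller2}(2) it is equivalent to $\gen{G,h_1}\ne K[X]$. I plan to exploit $\gen{G,h_1}\subseteq\gen{G,G^\prime}=\gen{G^\prime,g_m}$ together with the defining relation $h_1\cdot g_m\in\gen{g_1,\ldots,g_{m-1}}$: a hypothetical identity $1=h+a g_m$ with $h\in\gen{G^\prime}$ produces, after multiplication by $g_m$, the relation $g_m(1-a g_m)\in\gen{g_1,\ldots,g_{m-1}}$, and a local-factor analysis in the Artinian quotient $K[X]/\gen{g_1,\ldots,g_{m-1}}$ --- exploiting both $g_m\notin\gen{g_1,\ldots,g_{m-1}}$ (from minimality of $G$) and $I\ne K[X]$ --- is intended to furnish the required contradiction. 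Making this bookkeeping airtight, so that $h_1$ is actually witnessed as lying in some associated prime of $I$, is the technical heart of the argument.
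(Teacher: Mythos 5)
Parts (2), (3) and the first half of (1) are fine and follow the route the paper implicitly suggests (the paper offers no separate proof; the corollary is presented as a consequence of the iterated display preceding it together with Lemma \ref{lemMoeller1}). In particular, your identification $\gen{G,h_1,\ldots,h_l}=\gen{G,G^\prime}=\gen{G^\prime,g_m}$ and the convention $J:1=J$, which makes the trailing term the $i=l+1$ summand in both (2) and (3), are exactly what is needed; and your leading-monomial argument for $h_1\notin I$ is correct.

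The genuine gap is the second half of (1), the claim $\gen{G,h_1}\neq K[X]$, which you have not actually proved. Two of the ingredients you plan to use are problematic. First, $K[X]/\gen{g_1,\ldots,g_{m-1}}$ is \emph{not} Artinian: $\LM(g_m)=x_1^{\alpha_m}$ is the unique pure power of $x_1$ among the leading monomials of $G$, so the leading ideal of $\gen{g_1,\ldots,g_{m-1}}$ contains no power of $x_1$ and the quotient is infinite-dimensional, leaving no ``local-factor analysis in the Artinian quotient'' to perform. Second, $g_m\notin\gen{g_1,\ldots,g_{m-1}}$ is not an immediate consequence of minimality of $G$; minimality controls divisibility among the $\LM(g_j)$, but $\gen{g_1,\ldots,g_{m-1}}$ may have a leading ideal strictly larger than $\gen{\LM(g_1),\ldots,\LM(g_{m-1})}$. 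A short argument that closes the gap: since $\gen{G,h_1}\subseteq\gen{G,G^\prime}=\gen{G^\prime,g_m}$, it suffices to show $\gen{G^\prime,g_m}\neq K[X]$. If $\gen{G^\prime}$ were all of $K[x_2,\ldots,x_n]$, then, $G^\prime$ being a Gr\"obner basis, some $\LM(g^\prime_{i\alpha_i})=1$, i.e.\ $g^\prime_{i\alpha_i}\in K^\times$, forcing $\LM(g_i)=x_1^{\alpha_i}$; but $g_m$ is the unique element of $G$ whose leading monomial is a pure power of $x_1$, and here $i\le m-1$. Hence $A:=K[x_2,\ldots,x_n]/\gen{G^\prime}\neq 0$, and in $A[x_1]$ (which is $K[X]$ modulo the extension of $\gen{G^\prime}$) the image of $g_m$ is monic of degree $\alpha_m\ge1$ and therefore not a unit, so $1\notin\gen{G^\prime,g_m}$. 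This also shows, as a byproduct, that indeed $g_m\notin\gen{g_1,\ldots,g_{m-1}}$. With $\gen{I,h_1}\neq K[X]$ established, your reduction via Lemma \ref{lemMoeller2}(2) correctly yields $I\subsetneq I:h_1$.
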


With regard to Corollary \ref{corMoeller}(2), especially $\gen{G,h_1,\ldots,h_l}=\gen{G^\prime,g_m}$ with 
$G^\prime \subseteq K[x_2,\ldots,x_n]$ is predestined for induction since $\sqrt{\gen{G^\prime,g_m}} = 
\sqrt{\gen{F_1^\prime,g_m}} \cap \ldots \cap \sqrt{\gen{F_s^\prime,g_m}}$ if $\CF^\prime = (F_1^\prime,\ldots,
F_s^\prime)$ is a triangular decomposition of $G^\prime$.
Referring to Corollary \ref{corMoeller}(3), the triangular decomposition obtained by iterating the approach 
of Corollary \ref{corMoeller} respects the multiplicities of the zeros of $I$. Therefore the zero-sets of different 
triangular sets are in general not disjoint as the following example shows.

\begin{exmp}
Let $G = \{x_2^{10},x_1x_2^3+x_2^5,x_1^{11}\} \subseteq \Q[x_1,x_2]$. Then we obtain $G^\prime=
\{x_2^{10},x_2^3\}$, $G^\prime \smallsetminus G = \{x_2^3\}$, and the triangular decomposition 
$\CF=(F_1,F_2)$ of $\gen G$ with $F_1 = \gen G :x_2^3 = \gen{x_2^7,x_1+x_2^2}$ and $F_2 = 
\gen{G,x_2^3} = \gen{x_2^3,x_1^{11}}$. Moreover, it holds $\dimQ{\Q[x_1,x_2]/\gen G} = 40$, 
$\dimQ{\Q[x_1,x_2]/(\gen G : x_2^3)}=7$, and $\dimQ{\Q[x_1,x_2]/\gen{G,x_2^3}} = 33$.
Note that $\gen G \subsetneq F_1 \cap F_2$.
\end{exmp}

Algorithm \ref{algTriangDecomp} shows the algorithm by M\"oller to compute the triangular decomposition of
a zero-dimensional ideal.\footnote{The corresponding procedure is implemented in \singular in the library
\texttt{triang.lib}.}

\begin{algorithm}[H]                    
\caption{Triangular Decomposition (\texttt{triangM})}         
\label{algTriangDecomp}                          
\begin{algorithmic}[1]
\REQUIRE $I \subseteq K[X]$, a zero-dimensional ideal .
\ENSURE $\CF = (F_1,\ldots,F_s)$, a triangular decomposition of $I \subseteq K[X]$ such that $\dimK{K[X]/I} = 
\sum_{i=1}^s \dimK{K[X]/\gen{F_i}}$.
\vspace{0.1cm}
\STATE compute $G=\{g_1,\ldots,g_m\}$, a reduced Gr\"obner basis of $I$ with respect to the 
lexicographical ordering $>$ such that $\LM(g_1)<\ldots<\LM(g_m)$;
\STATE compute $G^\prime = \{g_1^\prime,\ldots,g_{m-1}^\prime\} \subseteq K[x_2,\ldots,x_n]$ where 
$g_i^\prime$ 
is the leading coefficient of $g_i$ in $(K[x_2,\ldots,x_n])[x_1]$;
\STATE $\CF^\prime = \texttt{triangM}\left(\gen{G^\prime}\right)$;
\STATE $\CF = \{F^\prime \cup \{g_m\} \mid F^\prime \in \CF^\prime\}$;
\FOR{$1 \leq i \leq m-1$}
\IF{$g_i^\prime \notin G$}
\STATE $\CF=\CF \cup \texttt{triangM}\left(\gen{G}:g_i^\prime\right)$;
\STATE $G = G \cup \{g_i^\prime\}$;
\ENDIF
\ENDFOR
\RETURN $\CF$;
\end{algorithmic}
\end{algorithm}

Note that Algorithm \ref{algTriangDecomp} is only based on Gr\"obner basis computations, and does not
use random elements. Hence, the result is uniquely determined which allows modular computations.
In the following we fix Algorithm \ref{algTriangDecomp} to compute a triangular decomposition.

\begin{rem}
Replacing line 7 in Algorihtm \ref{algTriangDecomp} by $\CF = \CF \cup \texttt{triangM}\left(\gen 
G : g_i^{\prime\infty}
\right)$ we obtain a disjoint triangular decomposition (i.e.\ $F_i,F_j \in \CF$ with $F_i \neq F_j$ implies 
$\gen{F_i} + \gen{F_j} = K[X]$). This decomposition does in general not respect the multiplicities (i.e.\ 
$\dimK{K[X]/I} \neq \sum_{i=1}^s \dimK{K[X]/\gen{F_i}}$). 
\end{rem}

\section{Modular methods} \label{secModMeth}

One possible modular approach for solving a zero-dimensional ideal is to just replace each involved 
Gr\"obner basis computation by its corresponding modular algorithm as described in \cite{IPS11}. 
Particularly, we can replace line $1$
in Algorithm \ref{algTriangDecomp} by $G = \texttt{modStd}(I)$. In this case it is possible to apply the 
probabilistic variant since we can easily verify in the end by a simple substitution whether the solutions 
obtained from the triangular sets are really solutions of the original ideal.
Nevertheless, we propose to compute the whole triangular decomposition via modular methods actually. 
The verification is then similar by just substituting the obtained result into the input polynomials.

\vspace{0.15cm}

We consider the polynomial ring ${\mathbb{Q}}[X]$, fix a global monomial ordering $>$ on ${\mathbb{Q}}[X]$, 
and use the following notation: If $S\subseteq{\mathbb{Q}}[X]$ is a set of polynomials, then $\LM(S):=\{\LM(f)\mid 
f\in S\}$ denotes the set of leading monomials of $S$.
If $f\in{\mathbb{Q}}[X]$ is a polynomial, $I=\left\langle f_{1},\ldots,f_{r}\right\rangle \subseteq{\mathbb{Q}}[X]$ 
is an ideal, and $p$ is a prime number which does not divide any denominator of the coefficients of $f,f_{1},
\ldots,f_{r}$, then we write $f_{p} := (f\mod p) \in {\mathbb{F}}_{p}[X]$ and $I_{p}:=\left\langle (f_{1})_{p},\ldots,
(f_{r})_{p}\right\rangle \subseteq{\mathbb{F}}_{p}[X]$.

\vspace{0.15cm}

In the following, $I=\left\langle f_{1},\ldots,f_{r}\right\rangle \subseteq{\mathbb{Q}}[X]$ will be a zero-dimensional 
ideal. The triangular decomposition algorithm (Algorithm \ref{algTriangDecomp}) applied to $I$ returns a list of 
triangular sets $\CF=(F_1,\ldots,F_s)$ such that $\sqrt I = \bigcap_{i=1}^s \sqrt{\gen{F_i}}$ and $\dimQ{\Q[X]/I} = 
\sum_{i=1}^s \dimQ{\Q[X]/\gen{F_i}}$.

With respect to modularization, the following lemma obviously holds:

\begin{lem} \label{lemModularization}
With notation as above, let $p$ be a sufficiently general prime number. 
Then $I_{p}$ is zero-dimensional, $\left((F_1)_p,\ldots,(F_s)_p\right)$ is a list of triangular sets, and $\sqrt{I_p} = 
\bigcap_{i=1}^s \sqrt{\gen{(F_i)_p}}$.
\end{lem}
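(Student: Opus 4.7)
The plan is to identify a finite set of ``bad'' primes outside of which the execution of Algorithm \ref{algTriangDecomp} on $I_p$ mirrors step-by-step its execution on $I$, and then invoke the correctness of the algorithm in characteristic $p$. Concretely, I want to choose $p$ such that: (i) $p$ divides no denominator appearing in $f_1,\ldots,f_r$; (ii) for each reduced Gr\"obner basis $G$ computed along the way, $p$ divides neither any leading coefficient nor any coefficient arising in the Buchberger reductions; (iii) for each pair of distinct polynomials that the algorithm compares (in particular for the membership tests $g_i^\prime \notin G$), their difference does not vanish mod $p$. Since there are finitely many Gr\"obner basis computations, finitely many ideal quotients, and finitely many comparisons in the whole recursion, the resulting exceptional set of primes is finite.

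First I would handle the outermost call. By the standard fact that for all but finitely many primes the reduction of the reduced Gr\"obner basis of $I$ with respect to $>$ is the reduced Gr\"obner basis of $I_p$ with the same set of leading monomials (see e.g.\ \cite{IPS11}), the polynomials $g_1,\ldots,g_m$ reduce mod $p$ to the reduced Gr\"obner basis of $I_p$. In particular $\LM(I)=\LM(I_p)$, and since $I$ is zero-dimensional this monomial ideal contains a pure power of each $x_i$, so $I_p$ is zero-dimensional as well. The extraction of the leading coefficients $g_i^\prime \in K[x_2,\ldots,x_n]$ in line 2 commutes with reduction mod $p$ once $p$ does not divide $\LC(g_i)$, so the recursive input $\langle G^\prime \rangle$ reduces to $\langle (G^\prime)_p \rangle$.

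Next, for the recursive call in line 3 and for the ideal quotients $\langle G \rangle : g_i^\prime$ in line 7, the same argument applies: computation of ideal quotients is itself performed via Gr\"obner bases, and again contributes only finitely many bad primes. The conditional $g_i^\prime \notin G$ in line 6 yields the same branching mod $p$ provided $p$ does not divide the difference of any two involved polynomials, which is a finite condition. Since the recursion terminates (the number of variables strictly decreases), the union over all recursive levels of the exceptional sets is finite. For any $p$ outside this union, Algorithm \ref{algTriangDecomp} applied to $I_p$ therefore produces exactly $((F_1)_p,\ldots,(F_s)_p)$.

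Finally, the claims of the lemma follow directly: each $(F_i)_p$ is a triangular set because $\LT(f) = x_{n-i+1}^{\alpha_i}$ is preserved as long as $p$ does not divide the leading coefficient (which we already excluded), and the radical equality $\sqrt{I_p}=\bigcap_{i=1}^s \sqrt{\langle (F_i)_p \rangle}$ is precisely the correctness statement of Algorithm \ref{algTriangDecomp} applied to the zero-dimensional ideal $I_p \subseteq \mathbb{F}_p[X]$. The main obstacle in a fully written proof is bookkeeping: carefully showing that the recursive structure of \texttt{triangM} only introduces finitely many exceptional primes at each level and that all branch decisions stay synchronized between the characteristic $0$ and characteristic $p$ runs; the underlying ideas are entirely standard, which is why the authors flag the statement as ``obvious''.
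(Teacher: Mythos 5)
Your argument is sound, but note that the paper itself offers no proof at all: Lemma \ref{lemModularization} is introduced with the phrase that it ``obviously holds'', so there is nothing to match your proposal against line by line. What you supply is the standard specialization argument that justifies this claim, and in fact you prove something slightly stronger than the statement: you show that for all but finitely many $p$ the entire run of Algorithm \ref{algTriangDecomp} on $I_p$ is the reduction mod $p$ of its run on $I$, i.e.\ that all but finitely many primes are \emph{lucky} in the sense of Definition \ref{defnLucky}(1); the lemma as stated only needs zero-dimensionality of $I_p$, preservation of the triangular-set shape, and the radical equality, which then follow from the correctness of M\"oller's algorithm over $\F_p$ (Lemmata \ref{lemMoeller1} and \ref{lemMoeller2} are field-independent). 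This extra strength is welcome rather than wasteful, since it is exactly what the later modular machinery (\textsc{deleteUnluckyPrimesTriang}, \textsc{pTestTriang}) implicitly relies on. Two steps you assert deserve the brief justification you only gesture at: that the reduced Gr\"obner basis of $I$ reduces, for almost all $p$, to the reduced Gr\"obner basis of $I_p$ with the same leading monomials (this is indeed the standard fact, and \cite{IPS11} is the right reference), and that the ideal quotients $\gen{G}:g_i^\prime$ commute with reduction mod $p$ outside a finite set of primes, which again follows because each such quotient is obtained from finitely many Gr\"obner basis computations; since $G$ is reduced, all leading coefficients are $1$, so no additional condition on $\LC(g_i)$ is actually needed. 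With that bookkeeping made explicit along the finite recursion tree of the characteristic-zero computation, your proof is complete and is the natural way to turn the paper's ``obviously'' into an argument.
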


Relying on Lemma \ref{lemModularization}, the basic idea of the modular triangular decomposition is as follows. 
First, choose a set $\mathcal P$ of prime numbers at random. Second, compute triangular decompositions 
$\CF_p$ of $I_p$ for $p\in \mathcal P$. Third, lift the modular triangular sets to triangular sets $\CF$ over $\Q[X]$. 

The lifting process consists of two steps. First, the set $\mathcal{FP}:=\{\CF_p \mid p\in \mathcal P\}$ is lifted to
$\CF_N$ with $(F_i)_N \subseteq(\mathbb{Z}/N\mathbb{Z})[X]$ and $N:=\prod_{p\in \mathcal P}p$ by applying 
the Chinese remainder algorithm to the coefficients of the polynomials occurring in $\mathcal{FP}$. 
Second, we obtain $\CF$ with $F_i \subseteq \Q[X]$ by lifting the modular coefficients occurring in $\CF_p$ 
to rational coefficients via the Farey rational map\footnote{\emph{Farey fractions} refer to rational reconstruction. 
A definition of \emph{Farey fractions}, the \emph{Farey rational map}, and remarks on the required bound on the 
coefficients can be found in \cite{KG83}.}. This map is guaranteed to be bijective provided that $\sqrt{N/2}$ is 
larger than the moduli of all coefficients of elements in $\CF$ with $F_i \subseteq \Q[X]$. 

We now define a property of the set of primes $\mathcal{P}$ which guarantees that the lifting process is 
feasible and correct. This property is essential for the algorithm.

\begin{defn} \label{defnLucky} 
Let $\CF = (F_1,\ldots,F_s)$ be the triangular decomposition of the ideal $I$ computed by Algorithm 
\ref{algTriangDecomp}.
\begin{enumerate}
\item A prime number $p$ is called \emph{lucky} for $I$ and $\CF$ if $((F_1)_p,\ldots,(F_s)_p)$ is a
          triangular decomposition of $I_p$. Otherwise $p$ is called \emph{unlucky} for $I$ and $\CF$.
\item A set $\mathcal P$ of lucky primes for $I$ and $\CF$ is called \emph{sufficiently large} for $I$ and
          $\CF$ if $$\prod_{p\in \mathcal P} p\geq\max\{2\cdot\left\vert c\right\vert ^{2}\mid c \text{ coefficient 
          occuring in } \CF\}.$$
\end{enumerate}
\end{defn}

From a theoretical point of view, the idea of the algorithm is now as follows: 
Consider a sufficiently large set $\mathcal  P$ of lucky primes for $I$ and $\CF$, compute 
the triangular decomposition of the $I_{p}$, $p\in \mathcal P$, via Algorithm \ref{algTriangDecomp}, and lift 
the results to the triangular decomposition of $I$ as aforementioned.

From a practical point of view, we face the problem that we do not know in advance whether a prime number 
$p$ is lucky for $I$ and $\CF$.

To handle this problem, we fix a natural number $t$ and an arbitrary set of primes $\mathcal{P}$ of cardinality 
$t$. Having computed $\mathcal{FP}$, we use the following test to modify $\mathcal{P}$ such that all primes in 
$\mathcal{P}$ are lucky with high probability:

\vspace{0.2cm}

\emph{\textsc{deleteUnluckyPrimesTriang:} We define an equivalence relation on $(\mathcal{FP},\mathcal{P})$ 
by $(\CF_p,p) \sim(\CF_q,q) :\Longleftrightarrow \big(\#\CF_p = \#\CF_q \;\text{and}\; \{\LM(F_p) \mid F_p \in \CF_p\} 
= \{\LM(F_q) \mid F_q \in \CF_q\} \big).$ Then the equivalence class of largest cardinality is stored in 
$(\mathcal{FP},\mathcal{P})$, the others are deleted.}

\vspace{0.2cm}

Since we do not know a priori whether the equivalence class chosen is indeed lucky and whether it is 
sufficiently large for $I$ and $\CF$, we proceed in the following way. We lift the set $\mathcal{FP}$ to $\CF$ 
over $\Q[X]$ as described earlier, and test the result with another randomly chosen prime number:

\vspace{0.2cm}

\emph{\textsc{pTestTriang:} We randomly choose a prime number $p\notin \mathcal P$ such that $p$ does 
not divide the numerator and denominator of any coefficient occuring in a polynomial in $\{f_1,\ldots,f_r\}$ or 
$\CF$. The test returns true if $(F_p \mid F \in \CF)$ equals the triangular decomposition $\CF_p$ computed 
by the fixed Algorithm \ref{algTriangDecomp} applied on $I_p$, and false otherwise.}

\vspace{0.2cm}

If \textsc{pTestTriang} returns false, then $\mathcal{P}$ is not sufficiently large for $I$ and $\CF$ or the 
equivalence class of prime numbers chosen was unlucky. In this case, we enlarge the set $\mathcal{P}$ by 
$t$ new primes and repeat the whole process. 
On the other hand, if \textsc{pTestTriang} returns true, then we have a triangular decomposition $\CF$ of $I$ 
with high probability.
In this case, we compute the solutions $S \subseteq \C^n$ with multiplicities of the triangular sets $F = 
\{f_1,\ldots,f_n\} \in \CF$ as follows. 
Solve the univariate polynomial $f_1(x_1)$ via a univariate solver counting multiplicities, substitute $x_1$ in 
$f_2(x_1,x_2)$ by these solutions of $f_1(x_1)$, solve the corresponding univariate polynomial, and continue 
inductively this way (call this step \textsc{solveTriang}).  
Finally, we verify the result $S \subseteq \C^n$ partially by testing whether the original ideal $I \subseteq \Q[X]$ 
is contained in every $F \in \CF$, and whether the sum of the multiplicities equals the $\Q$-dimension of 
$\Q[X]/I$ (call this step \textsc{testZero}).

We summarize modular solving in Algorithm \ref{algModSolve}.\footnote{The corresponding 
procedures are implemented in \textsc{Singular} in the library \texttt{modsolve.lib}.}

\begin{algorithm}[h]
\caption{Modular Solving (\texttt{modSolve})} \label{algModSolve}
\begin{algorithmic}[1]
\REQUIRE $I \subseteq \Q[X]$, a zero-dimensional ideal.
\ENSURE $S \subseteq \C^n$, a set of points in $\C^n$ such that $f(P)=0$ for all $f \in I, P\in S$.
\vspace{0.1cm}
\STATE choose $\mathcal P$, a list of random primes; \label{numOfPrimes1}
\STATE $\mathcal{FP} = \emptyset$;
\LOOP
\FOR{$p \in \mathcal P$}
\STATE $\CF_p = \texttt{triangM}(I_p)$, the triangular decomposition of $I_p$ via Algorithm \ref{algTriangDecomp};
\STATE $\mathcal{FP} = \mathcal{FP} \cup \{\CF_p\}$;
\ENDFOR
\STATE $(\mathcal{FP},\mathcal{P}) = \textsc{deleteUnluckyPrimesTriang}(\mathcal{FP},\mathcal{P})$;
\STATE lift $(\mathcal{FP},\mathcal{P})$ to $\CF$ over $\Q[X]$ by applying Chinese remainder algorithm and 
               Farey rational map;
\IF{\textsc{pTestTriang}$(I,\CF,\mathcal{P})$}
\STATE $S = \textsc{solveTriang}(\CF)$;
\IF{$\textsc{testZero}(I,\CF,S)$}
\RETURN $S$;
\ENDIF
\ENDIF
\STATE enlarge $\mathcal P$; \label{numOfPrimes2}
\ENDLOOP
\end{algorithmic}
\end{algorithm}

\begin{rem}
In Algorithm \ref{algModSolve}, the triangular sets $\CF_p$ can be computed in parallel. Furthermore,
we can parallelize the final verification whether $I \subseteq \Q[X]$ is contained in every $F \in \CF$.
\end{rem}

\section{Examples and timings} \label{secTimings}

In this section we provide examples on which we time the algorithm \texttt{modSolve} (cf.\  Algorithm 
\ref{algModSolve}) and its parallel version as opposed to the algorithm \texttt{solve} (the procedure 
\texttt{solve} is implemented in \singular in the library \texttt{solve.lib} and computes all roots of a 
zero-dimensional input ideal using triangular sets). 
Timings are conducted by using \singular{3-1-6} on an AMD Opteron 6174 machine with $48$ CPUs, 
$2.2$ GHz, and $128$ GB of RAM running the Gentoo Linux operating system. All examples are chosen 
from The SymbolicData Project (cf.\ \cite{G13}). % and the PHCpack test suite (cf.\ \cite{V99})

\begin{rem}
The parallelization of the modular algorithm is attained via multiple processes organized 
by \singular library code. Consequently, a future aim is to enable parallelization in the kernel 
via multiple threads.
\end{rem}

We choose the following examples to emphasize the superiority of modular solving and especially its 
parallelization:

\begin{exmp} \label{ex1}
\texttt{Cyclic\_7.xml} (cf.\ \cite{G13}).
\end{exmp}

\begin{exmp} \label{ex2}
\texttt{Verschelde\_noon6.xml} (cf.\ \cite{G13}).
\end{exmp}

\begin{exmp} \label{ex3}
\texttt{Pfister\_1.xml} (cf.\ \cite{G13}).
\end{exmp}

\begin{exmp} \label{ex4}
\texttt{Pfister\_2.xml} (cf.\ \cite{G13}).
\end{exmp}

%\begin{exmp} \label{ex5}
%\texttt{Pfister\_3.xml} (cf.\ \cite{G13}).
%\end{exmp}

Table \ref{tabModSolve} summarizes the results where $\texttt{modSolve}(c)$ denotes the parallelized 
version of the algorithm applied on $c$ cores. All timings are given in seconds.

\begin{table}[hbt]
\begin{center}
\begin{tabular}{|r|r|r|r|r|}
\hline
Example & \multicolumn{1}{c|}{\texttt{solve}} & \multicolumn{1}{c|}{$\texttt{modSolve}$} 
& \multicolumn{1}{c|}{$\texttt{modSolve}(10)$} & \multicolumn{1}{c|}{$\texttt{modSolve}(20)$} \\
\hline \hline
\ref{ex1} & $> 18$h & 692 & 217 & 152 \\ \hline
\ref{ex2} & 517 & 1223 & 522 & 371 \\ \hline
\ref{ex3} & 526 & 800 & 288 & 165 \\ \hline
\ref{ex4} & \hspace{1.3cm} 2250 & \hspace{1.3cm} 1276 & \hspace{1.3cm} 323 & \hspace{1.3cm} 160 \\ \hline 
%\ref{ex5} & 73 & 618 & 223 & 130 \\ \hline
\end{tabular}
\end{center}
\hspace{15mm}
\caption{Total running times in seconds for computing all roots of the considered examples via 
\texttt{solve}, \texttt{modSolve} and its parallelized variant $\texttt{modSolve}(c)$ for $c = 10,20$.} 
\label{tabModSolve}
\end{table}

\begin{rem}
Various experiments reveal that a sensitive choice of $\# \mathcal P$, the number of random primes in lines 
\ref{numOfPrimes1} and \ref{numOfPrimes2} in Algorithm \ref{algModSolve}, can decrease the running time
enormously.  To sum up, it is recommendable to relate $c$, the number of available cores, to $\# \mathcal P$.  
Particularly, in case of having more than ten cores to ones's disposal it is reasonable to set $c = \# \mathcal P$.
\end{rem}


\begin{thebibliography}{99999999999}

\bibitem[{AM99}]{AM99} Aubry, P.; Moreno Maza, M.: \emph{Triangular Sets for Solving Polynomial 
Systems: a Comparative Implementation of Four Methods}. Journal of Symbolic Computation 28,
125--154 (1999).

\bibitem[{CLMPX07}]{CLMPX07} Chen, C.; Lemaire, F.; Moreno Maza, M.; Pan, W.;  Xie, Y.: 
\emph{Efficient Computations of Irredundant Triangular Decompositions with the RegularChains Library}. 
Proceedings of Computer Algebra Systems and Their Applications '07. In: Shi, Y. et al. (Eds.): ICCS 2007, 
Part II, Lecture Notes in Computer Science 4488, 268--271 (2007). 

\bibitem[{DMSWX05}]{DMSWX05} Dahan, X.; Moreno Maza, M.; Schost, \'E.; Wu, W.; Xie, Y.: 
\emph{Lifting Techniques for Triangular Decompositions}. Proceedings of ISSAC '05, Beijing, 
China, ACM Press, 108--115 (2005). 

\bibitem[{DZ05}]{DZ05} Dayton, B.\ H.; Zeng, Z.: \emph{Computing the multiplicity structure in solving 
polynomial systems}. Proceedings of ISSAC '05, Beijing, China, ACM Press, 116--123 (2005). 

\bibitem[{DGPS12}]{DGPS12} Decker, W.; Greuel, G.-M.; Pfister, G.; Sch{\"o}nemann, H.:
\emph{\newblock {\sc Singular} {3-1-6} --- {A} computer algebra system for polynomial 
computations}. \newblock {http://www.singular.uni-kl.de} (2012).

\bibitem[{G13}]{G13} Gr\"abe, H.-G.: \emph{The SymbolicData Project --- Tools and Data for 
Testing Computer Algebra Software}. \newblock {http://www.symbolicdata.org} (2013).

\bibitem[{GP07}]{GP07} Greuel, G.-M.; Pfister, G.: \emph{A \textsc{Singular} Introduction to 
Commutative Algebra}. Second edition, Springer (2007).

\bibitem[{IPS11}]{IPS11} Idrees, N.; Pfister, G.; Steidel, S.: \emph{Parallelization of 
Modular Algorithms}. Journal of Symbolic Computation 46, 672--684 (2011).
  
\bibitem[{KG83}]{KG83} Kornerup, P.; Gregory, R.\ T.: \emph{Mapping Integers and Hensel 
Codes onto Farey Fractions}. BIT Numerical Mathematics 23(1), 9--20 (1983).

\bibitem[{L92}]{L92} Lazard, D.: \emph{Solving Zero-dimensional Algebraic Systems}.
Journal of Symbolic Computation 13, 117--131 (1992).

\bibitem[{LMX05}]{LMX05} Lemaire, F.; Moreno Maza, M.;  Xie, Y.: \emph{The RegularChains 
library in Maple 10}. Maplesoft, Canada (2005).

\bibitem[{LMX06}]{LMX06} Lemaire, F.; Moreno Maza, M.; Xie, Y.: \emph{Making a Sophisticated 
Symbolic Solver Available to Different Communities of Users}. Proceedings of Asian Technology 
Conference in Mathematics '06, Polytechnic University of Hong Kong (2006). 

\bibitem[{LM07}]{LM07} Li, X.; Moreno Maza, M.: \emph{Multithreaded Parallel Implementation of 
Arithmetic Operations modulo a Triangular Set}. Proceedings of Parallel Symbolic Computation '07, 
London, Canada, ACM Press, 53--59 (2007).

\bibitem[{M\"o93}]{M93} M\"oller, H.\ M.: \emph{On Decomposing Systems of Polynomial
Equations With Finitely Many Solutions}. Applicable Algebra in Engineering,
Communication and Computing 4, 217--230 (1993).

\bibitem[{M\"o97}]{M97} M\"oller, H.\ M.: \emph{Solving of Algebraic Equations -- An Interplay 
of Symbolical and Numerical Methods}. In: Multivariate Approximation, Recent Trends 
and Results (eds.: Hau{\ss}mann, W.; Jetter, K.; Reimer, M.), Mathematical Research 101,
Akademie Verlag, 161--176 (1997).

\bibitem[{Ma00}]{M00} Moreno Maza, M.: \emph{On Triangular Decompositions of Algebraic Varieties}.
Presented at the MEGA 2000 conference, Bath, United Kingdom (2000). 

\bibitem[{MX07}]{MX07} Moreno Maza, M.; Xie, Y.: \emph{Component-level Parallelization of 
Triangular Decompositions}. Proceedings of Parallel Symbolic Computation '07, London, Canada, 
ACM Press, 69--77 (2007).

\bibitem[{RR78}]{RR78} Ralston, A.; Rabinowitz, P.: \emph{A First Course in Numerical Analysis}. 
Second edition, McGraw-Hill (1978).

%\bibitem[{V99}]{V99} Verschelde, J.: \emph{PHCpack: A general-purpose solver for polynomial 
%systems by homotopy continuation}. Algorithm 795 in ACM Transactions on Mathematical 
%Software 25(2), 251--276, \newblock{http://www.math.uic.edu/$\sim$jan/download.html} (1999).

\end{thebibliography}
\end{document}